\theoremstyle{plain}
\newtheorem{theorem}{Theorem}[section]
\newtheorem{lemma}[theorem]{Lemma}
\theoremstyle{remark}
\theoremstyle{definition}
\newtheorem{definition}[theorem]{Definition}
\numberwithin{equation}{section}
\renewcommand{\ge}{\geqslant}
\newcommand{\eps}{\varepsilon}
\newcommand{\ifff}{if and only if }
\newcommand{\Real}{\operatorname{Re}}
\newcommand{\Slice}{\operatorname{Slice}}
\begin{document}
\title[DSD2P of Banach spaces is the same as the Daugavet property]{The diametral strong diameter 2 property of Banach spaces is the same as the Daugavet property}
\author{Vladimir Kadets}

\address{School of Mathematics and Informatics V.N. Karazin Kharkiv  National University,  61022 Kharkiv, Ukraine \newline
\href{http://orcid.org/0000-0002-5606-2679}{ORCID: \texttt{0000-0002-5606-2679} }}
\email{v.kateds@karazin.ua}

\thanks{ The research is done in frames of Ukrainian Ministry of Education and Science Research Program 0118U002036 and was partially supported  by project PGC2018-093794-B-I00 (MCIU/AEI/FEDER, UE)}
\subjclass[2000]{46B20}
\keywords{Banach space; the Daugavet property; big slices; convex combination of slices}

\begin{abstract}
We demonstrate the result stated in the title, thus answering an open question asked by  Julio Becerra Guerrero, Gin\'es L\'opez-P\'erez and Abraham Rueda Zoca in  J. Conv. Anal. \textbf{25}, no. 3 (2018). 
\end{abstract}

\maketitle

\section{Introduction} 

According to the famous theorem \cite{Daug} demonstrated by  Daugavet in 1963, the norm identity
\begin{equation} \label{eq1.1}
\|{\rm Id} - T\|=1+\|T\|,
\end{equation}
which has become known as the \emph{Daugavet equation}, holds for compact
operators on $C[0,1]$. 

Around 20 years ago the following general concept was introduced \cite{KSSW}: a Banach space $X$ has the \emph{Daugavet property}\/ if the  equation (\ref{eq1.1}) is fulfilled for every  rank one operator $T \in L(X)$ (here and below $L(X)$ denotes the space of all bounded linear operators $G \colon X \to X$). 

Surprisingly, the Daugavet property of $X$ implies the validity of  (\ref{eq1.1}) for much wider classes of operators than operators of rank~$1$: for example, for compact operators, weakly compact operators  \cite{KSSW}, operators that do not fix a copy of $\ell_1$ \cite{Shvid}, narrow operators \cite{KadSW2}, SCD-operators \cite{SCDsets}, etc.

Although the Daugavet property is of isometric nature, it has a number of linear-topological consequences. For example, a space with the Daugavet property cannot be reflexive, it cannot possess an unconditional basis, and so on. We refer to \cite[Section 1.4]{SpearsBook} for more motivation and the history of the subject.

Let $B_X$ be the unit ball of a Banach space $X$. A \emph{slice} of $B_X$  is a non-empty  part of $B_X$ that is cut out by a real hyperplane. Given $x^{\ast} \in X^{\ast}$ with $\|x^*\| = 1$ and $\alpha > 0$, denote the corresponding slice as
\[
\Slice(x^{\ast}, \alpha):= \left\{ x \in B_X \colon \Real{x^{\ast}(x)} > 1 - \alpha\right\}.
\]
The Daugavet property of $X$ can be reformulated in terms of slices of the unit ball: 
\begin{enumerate}[(i)]
\item  the property holds \ifff for every slice $S$ of $B_X$, every $\eps > 0$ and every $x \in S_X$ there is $s \in S$ with $\|x - s\| > 2-\eps$.
\end{enumerate}

There are two characterizations more \cite{KSSW, Shvid}, relevant to the subject of this paper, where slices are substituted by relatively weakly open subsets or convex combinations of slices, respectively:

\begin{enumerate}

\item[(ii)] $X$ has the Daugavet property  \ifff for every  non-empty  relatively weakly open subset $U$ of $B_X$, every $\eps > 0$ and every $x \in S_X$ there is $y \in U$ with $\|x - y\| > 2-\eps$.

\item[(iii)] $X$ has the Daugavet property  \ifff for every finite collection $S_1, \ldots, S_n$ of slices of $B_X$ and every collection of positive scalars $\lambda_1, \ldots, \lambda_n$ with $\lambda_1 + \ldots + \lambda_n = 1$ the corresponding convex combination $C = \lambda_1S_1 + \ldots + \lambda_nS_n$ of slices has the property that for  every $\eps > 0$ and every $x \in S_X$ there is $y \in C$ with $\|x - y\| > 2-\eps$.
\end{enumerate}

In \cite{IvaKad} it was asked whether the condition $x \in S_X$ in (i) can be changed to $x \in S \cap S_X$. In order to formalize this question the following definition was introduced:

\begin{definition} \label{defBadPr}
A Banach space $X$ is said to be \textit{a space with bad projections}  if for every slice $S$ of $B_X$, every $\eps > 0$ and every $x \in S \cap S_X$ there is $s \in S$ with $\|x - s\| > 2-\eps$. We denote this condition $X \in SBP$, or ``$X$ is an SBP space''.
\end{definition}
The name is motivated by the fact that $X \in SBP$ \ifff $\|{\rm Id} - P\| \ge 2$ for every projection $P \in L(X)$ of rank~$1$.

It was demonstrated \cite{IvaKad} that every absolute sum of two SBP spaces is an SBP space. On the other hand, the Daugavet property is inherited only by $\ell_1$ and $\ell_\infty$ sums, consequently there are SBP spaces that do not have the Daugavet property.

Motivated by this result,  Becerra Guerrero, L\'opez-P\'erez and  Rueda Zoca \cite{BLR18} introduced the following concepts.

\begin{definition}[{\cite[Definition 2.1]{BLR18}}] 
A Banach space $X$ is said to have the \textit{diametral diameter two property} (DD2P)  if for every non-empty relatively weakly open subset  $U$ of of $B_X$, every $\eps > 0$ and every $x \in U \cap S_X$ there is $y \in U$ with $\|x - y\| > 2-\eps$. 
\end{definition}

Comparing this with the characterization (ii), one easily sees that the Daugavet property implies the DD2P. On the other hand, DD2P is stable under all $\ell_p$ sums  \cite[Theorem 2.11]{BLR18}, so the inverse implication does not hold.

Finally, we are prepared for the main subject of interest of this paper.

\begin{definition}[{\cite[Definition 3.1]{BLR18}}] \label{defDiametral2}
A Banach space $X$ is said to have the \textit{diametral strong diameter two property} (DSD2P)  if for every convex combination  $C$ of slices of $B_X$, every $\delta > 0$ and every $x \in C$ there is $y \in C$ with $\|x - y\| > 1 + \|x\| -\delta$. 
\end{definition}

Again, from the characterization (iii) of  the Daugavet property, one can deduce \cite[Example 3.3]{BLR18} that the Daugavet property implies the DSD2P. 

The aim of this article is to demonstrate that the inverse implication is also true, so the diametral strong diameter 2 property of Banach spaces is the same as the Daugavet property.  The validity of this inverse implication was suggested in \cite[Question 4.1]{BLR18}. This question remained open since 2015, when the arXive preprint of \cite{BLR18} was published, and was mentioned as open problem in \cite{Hall2016, Hall, NadThes, PirkThes}. Our result, combined with the known results about the Daugavet property, gives also the positive answers to Questions 4.3 and 4.4 of \cite{BLR18} (the last one was already solved directly in \cite{Hall2016}).

\section{The main result}

Let us start with a useful geometrical concept.

\begin{definition}[{\cite[Definition 1.2.9]{KadThes}}] 
Let $X$ be a normed space,  $\eps > 0$ and $x,y \in X$.  The elements $x,y$ are said to be {\it $\eps$-quasi-codirected}   if  $\|x+y\| \ge \|x\|+\|y\| - \eps$. 
\end{definition}

\begin{lemma}[{\cite[Lemma 1.2.10]{KadThes}}] \label{LemEps-quasi}
Let  $x,y \in X$ be $\eps$-quasi-codirected. Then for every $a, b > 0$ the elements $ax, by$ are $(\eps \max\{a, b\})$-quasi-codirected.
\end{lemma}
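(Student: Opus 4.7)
The plan is to verify the inequality $\|ax+by\|\ge \|ax\|+\|by\|-\eps\max\{a,b\}$ by a direct decomposition argument, exploiting the asymmetry in $\max\{a,b\}$. Without loss of generality I will assume $a\ge b>0$, so that $\max\{a,b\}=a$; the opposite case follows by swapping the roles of $x$ and $y$.

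The key trick is the rewriting
\[
ax+by \;=\; a(x+y)\;-\;(a-b)\,y,
\]
which isolates the scalar $a$ multiplying the ``good'' vector $x+y$ for which the quasi-codirection hypothesis gives a lower bound. Applying the reverse triangle inequality to this decomposition yields
\[
\|ax+by\| \;\ge\; a\,\|x+y\| \;-\; (a-b)\,\|y\|.
\]
Now I would invoke the hypothesis $\|x+y\|\ge \|x\|+\|y\|-\eps$ to obtain
\[
\|ax+by\| \;\ge\; a\bigl(\|x\|+\|y\|-\eps\bigr) - (a-b)\,\|y\| \;=\; a\|x\| + b\|y\| - a\eps,
\]
which is exactly $\|ax\|+\|by\|-\eps\max\{a,b\}$, as required.

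There is no serious obstacle here; the only subtlety is choosing the decomposition with the larger coefficient $a$ factored out of $x+y$ (rather than a symmetric splitting), so that the ``waste'' term $(a-b)\|y\|$ exactly cancels the $a\|y\|$ produced by the quasi-codirection estimate, leaving the clean bound $a\|x\|+b\|y\|$ and an error of only $a\eps$ instead of something like $(a+b)\eps$. The case $b\ge a$ is handled identically by writing $ax+by = b(x+y)-(b-a)x$.
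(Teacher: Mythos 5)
Your proof is correct and coincides with the paper's argument: the same reduction to the case $a\ge b$, the same decomposition $ax+by=a(x+y)-(a-b)y$, the same application of the triangle inequality and the hypothesis, yielding the bound $a\|x\|+b\|y\|-a\eps$. Nothing is missing.
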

\begin{proof}
Without loss of generality we may assume $a \ge b$. Then $a =\max\{a, b\}$ and
\begin{align*}
\|ax+by\| &= \|a(x+y) - (a - b)y\| \ge a\|x+y\| - (a - b)\|y\| \\
 &\ge a(\|x\|+\|y\| - \eps) - (a - b)\|y\| = a\|x\|+b\|y\| - a \eps.
\end{align*}
\end{proof}

\begin{theorem}  \label{Theo-main}
Let $X$ be a Banach space with the diametral strong diameter two property. Then 
$X$ has the  Daugavet property.
\end{theorem}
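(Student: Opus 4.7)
My plan is to verify characterization (iii) of the Daugavet property directly. Given a convex combination $C = \lambda_1 S_1 + \cdots + \lambda_n S_n$ of slices of $B_X$, a point $x \in S_X$, and $\eps > 0$, I want to produce $y \in C$ with $\|x - y\| > 2 - \eps$. The strategy is to enlarge $C$ by splicing in an auxiliary slice through $x$, apply DSD2P to a carefully positioned element of the enlargement, and then read off an element of the original $C$ that is nearly antipodal to $x$.

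Concretely, by Hahn--Banach pick $x^* \in S_{X^*}$ with $x^*(x) = 1$, choose a small $\alpha > 0$, and set $S_0 := \Slice(x^*, \alpha)$. For a weight $t \in (0,1)$ still to be tuned, I form the augmented convex combination $\tilde C := t S_0 + (1-t)C$, which is again a convex combination of slices of $B_X$. I pick $c_0 \in C$ with $x^*(c_0)$ close to $\sup_{c \in C} x^*(c)$ and put $\tilde x := tx + (1-t)c_0 \in \tilde C$. The role of the choice of $c_0$ is to make $\tilde x$ as aligned with the $x^*$-direction and $\|\tilde x\|$ as close to $1$ as the geometry of $C$ allows. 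Applying DSD2P to $\tilde x$ inside $\tilde C$ yields, for any prescribed $\delta > 0$, an element $\tilde y = ts' + (1-t) c'$ with $s' \in S_0$, $c' \in C$, and $\|\tilde x - \tilde y\| > 1 + \|\tilde x\| - \delta$. The candidate element of $C$ is then $y := c'$, and the task becomes to verify $\|x - c'\| > 2 - \eps$.

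This is where Lemma~\ref{LemEps-quasi} plays its role. Writing $\tilde x - \tilde y = t(x - s') + (1-t)(c_0 - c')$ and combining the DSD2P lower bound with the triangle inequality and $\|x-s'\|, \|c_0-c'\| \leq 2$, one sees that both summands must be nearly maximal in norm, and moreover that the scaled pair $t(x-s')$, $(1-t)(c_0 - c')$ must be almost quasi-codirected. Lemma~\ref{LemEps-quasi} is precisely what lets me undo the scaling factors $t$ and $1-t$ and conclude that $(x - s')$ and $(c_0 - c')$ themselves are approximately quasi-codirected. At that point the constraint $s' \in S_0$ (so $x^*(s') \approx 1$, which forces $x - s'$ to have small $x^*$-value) together with the $x^*$-alignment of $c_0$ should pin the direction of $c_0 - c'$ close to $x$, and hence place $c'$ close to $-x$.

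The main obstacle I foresee is exactly this directional control of $c'$. DSD2P only guarantees that $\tilde y$ is far from $\tilde x$ in norm; the supporting functional witnessing that farness need not be $x^*$, so a priori $c'$ could be far from $c_0$ in a direction orthogonal to (or otherwise unrelated to) $x$. The closing step must propagate the alignment of $\tilde x$ with $x^*$ quantitatively through the DSD2P to produce alignment of $-c'$ with $x$, which is why the quasi-codirected formalism of Lemma~\ref{LemEps-quasi} (rather than a mere diameter bound) is essential. I expect to tune $t$, $\alpha$, $\delta$ in a definite order --- first sending $\alpha$ and $\delta$ to $0$, then choosing $t$ in terms of $\eps$ --- and if the plain construction does not close, a natural enrichment is to include further slices in $\tilde C$ (for instance $\Slice(-x^*, \alpha)$ with a small weight) so as to further constrain the feasible ``far'' directions.
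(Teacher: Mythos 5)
There is a genuine gap, and it is the one you yourself flag: the directional control of $c'$ never materializes, and the quantitative step just before it is not available either. First, your claim that ``both summands must be nearly maximal in norm'' needs $\|\tilde x\|$ to be close to $1$: from $t\|x-s'\|+(1-t)\|c_0-c'\|\ge\|\tilde x-\tilde y\|>1+\|\tilde x\|-\delta$ and $\|x-s'\|\le 2$ one gets $\|c_0-c'\|>\bigl(1+\|\tilde x\|-2t-\delta\bigr)/(1-t)$, which is vacuous unless $1+\|\tilde x\|$ is nearly $2$. But $C$ is an arbitrary convex combination of slices, so $\sup_{c\in C}\re x^{*}(c)$ may be small or negative and $\|\tilde x\|=\|tx+(1-t)c_0\|$ can be as small as roughly $2t-1$; the only way to guarantee $\|\tilde x\|\approx 1$ for every $C$ is to take $t\approx 1$, and then the factor $1-t$ destroys all information about $c'$. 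Second, and decisively: even in the favourable case $\|\tilde x\|\approx 1$, what the DSD2P inequality at $\tilde x$ gives is that $\tilde x$ and $-\tilde y$ are $\delta$-quasi-codirected, i.e.\ it constrains directions relative to $\tilde x$; concretely you would learn that $s'$ is nearly antipodal to $x$ and $c'$ nearly antipodal to $c_0$. Since $C$ need not contain any point near $x$, ``$c'\approx -c_0$'' cannot be upgraded to $\|x-c'\|>2-\eps$; and the fact that $x^{*}(s')\approx 1$ only says $x-s'$ has small $x^{*}$-value, which does not pin its direction towards $x$, so the alignment chain you hope to run through Lemma~\ref{LemEps-quasi} breaks at exactly the point you anticipated.

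The way out is to abandon the attempt to verify (iii) with the far point inside the given $C$ (checking (i) suffices for the Daugavet property) and instead to use the freedom of choosing the convex combination yourself, so that the point at which DSD2P is applied is a positive multiple of $x$. That is the paper's argument: with $\beta=\min\{\alpha,\eps/2\}$ and $S_1=\Slice(x^{*},\beta)$, the combination $\frac{1}{2}S_1+\frac{1}{2}(-S_1)$ has $0$ as a norm-interior point, hence contains $rx$ for some $r>0$; DSD2P at $rx$ yields $y=\frac{1}{2}s-\frac{1}{2}h$ with $\|rx-y\|>1+r-\delta$, so $\bigl\|rx-\frac{1}{2}s\bigr\|>r+\frac{1}{2}-\delta$, i.e.\ $rx$ and $-\frac{1}{2}s$ are $\delta$-quasi-codirected, and Lemma~\ref{LemEps-quasi} removes the scalars to give $\|x-s\|\ge 2-\eps/2-\delta/r>2-\eps$ with $s\in S_1\subset S$. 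Your proposed enrichment (adding $\Slice(-x^{*},\alpha)$ with small weight) is exactly the right germ, but it only closes once you drop the original $C$ altogether, i.e.\ once you stop trying to land the far point in an arbitrary $C$ and engineer the combination around $x$ instead.
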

\begin{proof}
We are going to demonstrate that $X$ satisfies the characterization (i). 
Let  $\eps > 0$, $x \in S_X$ and a slice $S = \Slice(x^{\ast}, \alpha)$ of $B_X$ be given, where  $x^{\ast} \in X^{\ast}$, $\|x^*\| = 1$ and $\alpha > 0$. We need to demonstrate the existence of $s \in S$ with $\|x - s\| > 2 - \eps$. To this end, take $\beta = \min\{\alpha, \eps/2\}$ and consider  the slices 
$$S_1 = \Slice(x^{\ast}, \beta) \subset S,  \quad S_2 = \Slice(- x^{\ast}, \beta) = - S_1
$$ 
and the convex combination $C$ of slices defined as 
\begin{equation}\label{eq-setC}
C = \frac{1}{2}S_1 + \frac{1}{2}S_2 = \left\{\frac{1}{2}y_1 - \frac{1}{2}y_2 \colon y_1, y_2 \in S_1\right\}.
\end{equation}
Remark that every element $z \in S_1$ has $\|z\| \ge \Real x^\ast (z) > 1 - \beta \ge 1 - \eps/2$ and  $S_1$ has not empty norm-interior 
\[
 W = \left\{ z \in X \colon \|z\| < 1, \Real{x^{\ast}(z)} > 1 - \beta \right\}.
\]
Then $0 \in \frac{1}{2}W - \frac{1}{2}W \subset C$ is a norm-interior point of $C$, so there is such an $r \in (0, \frac{1}{2})$ that $r B_X \subset C$. Consider $r x \in C$ and $\delta \in \left(0, \eps/(2r)\right)$. According to Definition \ref{defDiametral2}, there is $y \in C$ with $\|rx - y\| > 1 + r\|x\| -\delta$. By \eqref{eq-setC}, we may represent $y$ as 
$y = \frac{1}{2}s - \frac{1}{2}h$
with $s, h \in S_1$.  Then
$$
\left\|rx - \frac{1}{2}s\right\| = \left\|rx - y + \frac{1}{2}h\right\| \ge \|rx - y\| - \frac{1}{2} > r\|x\| + \frac{1}{2}  - \delta.
$$
This means that the elements $rx$ and $-\frac{1}{2}s$ are $\delta$-quasi-codirected. Applying Lemma \ref{LemEps-quasi}, we obtain that the elements $x$ and $- s$ are $\frac{\delta}{r}$-quasi-codirected, that is 
$$
\|x - s\| \ge \|x\| +  \|s\| - \frac{\delta}{r} \ge 2 - \frac{\eps}{2} - \frac{\delta}{r} > 2 - \eps.
$$
Since $s \in S_1 \subset S$, this completes the proof.
\end{proof}

\end{document}